\newcommand{\R}{\mathbb R}
\newcommand{\N}{\mathbb N}
\theoremstyle{plain}
\newtheorem{theorem}{Theorem}
\newtheorem{prop}{Proposition}
\newtheorem{lemma}{Lemma}
\newtheorem{cor}{Corollary}
\theoremstyle{remark}
\title{Thick coverings for the unit ball of a Banach space}
\author{Jesus Castillo}
\address{Author's address: Departamento de Matematicas,
Universidad de Extremadura, 06071 Badajoz, Spain; e-mail: castillo@unex.es}
\author{Pier Luigi Papini}
\address{Author's address: Via Martucci, 19, 40136
Bologna, Italia; e-mail: plpapini@libero.it}
\author{Marilda Sim$\widetilde{o}$es}
\address{Author's address: Dipartimento di Matematica "G. Castelnuovo",
Universit\'a di Roma "La Sapienza", P.le A. Moro 2,
00185 Roma, Italia; e-mail: simoes@mat.uniroma1.it}
\thanks{This research has been supported
in part by project MTM2010-20190-C02-01 and the program Junta de
Extremadura GR10113 IV Plan Regional I+D+i, Ayudas a Grupos de
Investigaci\'on.}
\thanks{Keywords and phrases:  Thickness, Nets, Uniform non squareness}
\thanks{MR(2010) Subject Classification   46B20, 46B99}
\begin{document}

\maketitle

\begin{abstract}  We study the behaviour of Whitley's
thickness constant  of a Banach space with respect to
$\ell_p$-products and we compute it for classical
$L_p$-spaces.\end{abstract}

\section{Introduction and basic results}

This paper contains a study of Whitley's thickness constant and
its computation  in classical $L_p$ spaces  and $\ell_p$-products of Banach spaces.
Unless otherwise stated, we shall assume that $X$ is a real
infinite-dimensional Banach space, but  most results also hold in finite-dimensional spaces.
We shall denote by $B(x,r)$ the
ball centered at $x$, with radius $r$. The symbols $B_X$  and
$S_X$ will denote the unit ball and the unit sphere of $X$.
A finite set $F$ is said to be an
$\varepsilon$-$net$ for a subset $A \subset X$ if for any $a\in A$
there exists $f\in F$ such that $||a-f||\leq \varepsilon$.

Whitley introduced in \cite{W} the \emph{thickness} constant
$T_W(X)$ as follows:
$$T_W(X)= \inf\big\{\varepsilon>0: \, \mathrm{there \; exists \; an\;\;}
\varepsilon\mathrm{-net} \;\; F\subset S_X \mathrm{\; for} \;
S_X\}.$$ To study the thickness constant, it will be helpful to
consider the following equivalent formulation (see \cite[Prop.
3.4]{MP1}):
$$T(X)= \inf\big\{\varepsilon>0 : \; \exists
\{x_1,...,x_n\} \subset S_X  :  B_X\subset\bigcup_{i
\in\{1,...,n\}}B(x_i,\varepsilon) \big\}.$$

\begin{lemma}\label{uguale} If $X$ is an infinite dimensional Banach
space then $T(X) = T_W(X).$
\end{lemma}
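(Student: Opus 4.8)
The plan is to establish the two inequalities $T_W(X)\le T(X)$ and $T(X)\le T_W(X)$ separately. The first is immediate and uses nothing about dimension: if $\{x_1,\dots,x_n\}\subset S_X$ satisfies $B_X\subset\bigcup_i B(x_i,\varepsilon)$, then in particular $S_X\subset\bigcup_i B(x_i,\varepsilon)$, so $\{x_1,\dots,x_n\}$ is an $\varepsilon$-net for $S_X$ contained in $S_X$; hence every $\varepsilon$ admissible in the definition of $T(X)$ is admissible in the definition of $T_W(X)$, and taking infima yields $T_W(X)\le T(X)$.

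For the reverse inequality I would first record the lower bound $T_W(X)\ge1$, valid in every infinite-dimensional Banach space; this is the only place where the hypothesis is used, and the statement is false without it. I would prove it by a Riesz-lemma argument: if $F=\{f_1,\dots,f_n\}\subset S_X$ were an $\varepsilon$-net for $S_X$ with $\varepsilon<1$, then $M:=\operatorname{span}F$ is a proper finite-dimensional subspace, so Riesz's lemma provides $x\in S_X$ with $\operatorname{dist}(x,M)>\varepsilon$, whence $\|x-f_i\|>\varepsilon$ for every $i$, a contradiction.

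The heart of the matter is then the observation that once $\varepsilon\ge1$, any covering of the sphere is automatically a covering of the whole ball: if $F\subset S_X$ is an $\varepsilon$-net for $S_X$ with $\varepsilon\ge1$, then $B_X\subset\bigcup_{f\in F}B(f,\varepsilon)$. Indeed, $\|0-f\|=1\le\varepsilon$ for any $f\in F$, and for $x\in B_X\setminus\{0\}$ we put $t=\|x\|\in(0,1]$, choose $f\in F$ with $\|x/t-f\|\le\varepsilon$, and estimate
$$\|x-f\|\le t\,\|x/t-f\|+(1-t)\,\|f\|\le t\varepsilon+(1-t)=1+t(\varepsilon-1)\le\varepsilon,$$
the last inequality because $0\le t\le1$ and $\varepsilon-1\ge0$. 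Consequently every $\varepsilon$ admissible for $T_W(X)$ is admissible for $T(X)$; applying this to each $\varepsilon>T_W(X)$, which satisfies $\varepsilon\ge1$ by the previous paragraph, and letting $\varepsilon\downarrow T_W(X)$ gives $T(X)\le T_W(X)$. Combining the two inequalities completes the proof.

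I do not anticipate a genuine obstacle here; the only delicate point is the bound $T_W(X)\ge1$, which is precisely what makes the interior of the ball come for free once the sphere has been covered, and which explains why the infinite-dimensional hypothesis is needed.
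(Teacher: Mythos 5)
Your proof is correct, but it takes a genuinely different route from the paper. The paper deduces $T(X)\le T_W(X)$ from a soft covering principle: if a finite family of closed convex sets covers a set $A$ that is weakly dense in its convex hull, it also covers $\overline{conv}(A)$ (proved by passing to weak*-closures in $X^{**}$ and intersecting back with $X$); applied to $A=S_X$, which in infinite dimensions has $\overline{conv}(S_X)=B_X$, this upgrades any covering of the sphere by balls $B(x_i,\varepsilon)$ to a covering of the ball at no cost in the radius, with no lower bound on $\varepsilon$ needed. You instead argue quantitatively: first $T_W(X)\ge 1$ via Riesz's lemma (correct --- with $\varepsilon<1$ one picks $x\in S_X$ at distance $>\varepsilon$ from the finite-dimensional span of the net), and then the elementary convexity estimate $\|x-f\|\le t\|x/t-f\|+(1-t)\|f\|\le 1+t(\varepsilon-1)\le\varepsilon$ shows that once $\varepsilon\ge1$ a covering of $S_X$ by balls centred on $S_X$ automatically swallows the interior. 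Both arguments are sound. The paper's buys generality (it works for arbitrary finite families of closed convex sets and is the natural companion of the Ljusternik--\v{S}nirel'man antipodal theorem cited right after the lemma), at the price of invoking weak/weak* topology; yours is entirely elementary and makes explicit the geometric reason the interior comes for free, namely that in infinite dimensions the covering radius can never drop below $1$, which is exactly the threshold at which star-shapedness of the balls toward the origin kicks in.
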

\begin{proof} That $T_W(X)\leq T(X)$ is clear. The converse inequality follows
from \cite[Prop. 2]{CP2}, which we reproduce here for the sake of
completeness: \emph{Let $A$ be a subset of a Banach space $X$
which is weakly dense in its convex hull $conv(A)$. If a finite
family of convex closed sets covers $A$ they also cover the closed
convex hull of $A$.} Indeed, assume $A \subset\cup_{i
\in\{1,...,n\}} C_i$ for some closed convex sets $C_i$. Taking the
weak*-closures in $X^{**}$ one gets $\overline{conv}^{w^*}(A) =
\overline{A}^{w^*} \subset \cup_{i \in\{1,...,n\}}
\overline{C_i}^{w^*}$. Now, intersection with $X$ yields
$$\overline{conv}(A) = X \cap \overline{conv}^{w^*}(A) \subset
X\cap \cup_{i \in\{1,...,n\}} \overline{C_i}^{w^*}  = \cup_{i
\in\{1,...,n\}} C_i.$$
\end{proof}

This result can be considered a generalization (see \cite{papini})
of the antipodal theorem of Ljusternik and $\check{S}$nirel'man
(see \cite [p. 180]{LS} or else \cite{fv}): \emph{Let $X$ be an
infinite dimensional Banach space; if finitely many balls cover
the unit sphere of $X$, then at least one of them must contain an
antipodal pair $(y, -y)$}.

Note that if $X$ is any finite-dimensional space, then one has
$T(X)=1$, while $T_W(X)=0$ due to the compactness of $S_X$. It is
also clear that $T(X) \in [1,2]$ for every infinite-dimensional
space. Generalizations of $T(\cdot)$ were considered and studied
in \cite{MP1, CP2, D}; while relations with other parameters can
be seen in \cite{P, MP2, CP}. Spaces $X$ for which $T(X)=2$ have
been considered in \cite{BCP,KSW,L}). In particular, a Banach
space $X$ for which $T(X)=2$ must contain $\ell_1$ (\cite{BCP});
hence it cannot be reflexive (see also \cite[Thm. 1.2]{KSW}).
Thus, reflexive spaces $X$ have $T(X)<2$. Upper and lower
estimates for $T(X)$ in uniformly convex spaces, as well as upper
estimates in terms of the modulus of smoothness, follow from
results in \cite{MP2}. A reasonable characterization of the spaces
$X$ with $T(X)=1$ seems to be unknown. The value of $T(\cdot)$ in
many spaces is known (see \cite{P}); in particular: $T(c_0)=1$ and
$T(\ell_p)=2^{1/p}$ for $1 \leq p< \infty$. Our results in Section
3 can be considered the vector-valued generalization of these
estimates.

\section{Whitley constant of  $L_p$-spaces}

While it is known that $T(L_1)=2$ (see \cite[Ex. 3.6]{BCP1}), to
the best of our knowledge the thickness of $L_p[0,1]$ for $p>1$ is
unknown.

\begin{theorem} For  $1 \leq p < \infty$ one has $T(L_p[0,1])=2^{1/p}$.
\end{theorem}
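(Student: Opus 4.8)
The plan is to prove both inequalities $T(L_p[0,1])\le 2^{1/p}$ and $T(L_p[0,1])\ge 2^{1/p}$ separately, exploiting the fact that $L_p[0,1]$ contains an isometric copy of $\ell_p$ and, conversely, can be built as an $\ell_p$-sum of copies of $L_p$ on the dyadic intervals.

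For the lower bound $T(L_p[0,1])\ge 2^{1/p}$, I would use the known value $T(\ell_p)=2^{1/p}$ together with a stability-type argument: split $[0,1]$ into countably many disjoint intervals $I_k$ of positive measure, so that $L_p[0,1]=\big(\bigoplus_k L_p(I_k)\big)_{\ell_p}$. Given any finite family $x_1,\dots,x_n\in S_{L_p}$ whose $\varepsilon$-balls cover $B_{L_p}$, I would like to "project" onto the $\ell_p$-direction spanned by the normalised indicator functions $e_k=|I_k|^{-1/p}\mathbf 1_{I_k}$; the conditional expectation onto the $\sigma$-algebra generated by the $I_k$ is a norm-one projection $P$ with $\|Pf\|_p\le\|f\|_p$, and $P$ maps $L_p[0,1]$ onto a space isometric to $\ell_p$. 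Since norm-one projections do not decrease Whitley thickness in general one must be a little careful, but the point is that $P$ maps the covering of $B_{L_p[0,1]}$ to a covering of $B_{\ell_p}$ by $n$ balls of radius $\varepsilon$ centred in $B_{\ell_p}$ (not necessarily in the sphere, but one can rescale), forcing $\varepsilon\ge T(\ell_p)=2^{1/p}$ up to the usual adjustments. In fact the cleanest route avoids $\ell_p$ entirely: take the $n+1$ functions $g_j=\mathbf 1_{A_j}/|A_j|^{1/p}$ for a partition $A_0,\dots,A_n$ of $[0,1]$ into sets of equal measure $1/(n+1)$; these are unit vectors, any two satisfy $\|g_i-g_j\|_p=2^{1/p}$, and by a pigeonhole argument on which ball $B(x_i,\varepsilon)$ contains which $g_j$ one gets two of them in the same ball, hence $2^{1/p}\le 2\varepsilon$ — this only gives $\varepsilon\ge 2^{1/p-1}$, which is too weak, so one genuinely needs the sharper incompressibility of $\ell_p$ rather than a two-point argument.

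For the upper bound $T(L_p[0,1])\le 2^{1/p}$, the idea is to exhibit, for each $\delta>0$, a finite $(2^{1/p}+\delta)$-net. Fix a large $N$ and the uniform dyadic partition of $[0,1]$ into $2^N$ intervals $J_1,\dots,J_{2^N}$ of length $2^{-N}$; let $E_N$ be the corresponding conditional expectation. For $f\in B_{L_p}$, the martingale convergence theorem gives $\|f-E_Nf\|_p\to0$, but not uniformly, so instead I would argue directly on the finite-dimensional space $V_N=E_N(L_p)\cong\ell_p^{2^N}$: since $T(\ell_p^{m})$ can be bounded above by $2^{1/p}$ uniformly in $m$ (the extremal net for $\ell_p$ works in every $\ell_p^m$, $m\ge 2$), there is a net $F_N\subset S_{V_N}$ that $(2^{1/p}+\delta/2)$-covers $B_{V_N}$; the subtle point is that this does not immediately cover $B_{L_p}$ because $E_N$ is a contraction, not an isometry, and $\|f-E_Nf\|_p$ need not be small. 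The resolution is to replace "small tail" by a covering of the whole ball: one shows that for every $f\in B_{L_p}$ there is $h\in F_N$ with $\|f-h\|_p\le 2^{1/p}+\delta$ by decomposing $f=E_Nf+(f-E_Nf)$ and observing that $f-E_Nf$ has mean zero on each $J_i$, so $\|E_Nf\|_p^p+\|f-E_Nf\|_p^p=\|f\|_p^p\le 1$ fails in general ($L_p$ is not Hilbertian for $p\ne2$), and one must instead use the precise extremal structure: pick $h=|J_{i}|^{-1/p}\mathbf 1_{J_i}\cdot\mathrm{sign}$ adapted to where $f$ has most of its mass.

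The step I expect to be the main obstacle is exactly this upper-bound covering argument: making the "$2^{1/p}+\delta$ covers $B_{L_p}$" claim rigorous when $p\neq 2$, since one cannot use orthogonality of $E_Nf$ and $f-E_Nf$. I would handle it by choosing the net to consist of vectors supported on single dyadic blocks (mimicking the $\ell_p$-net built from $\pm e_i$'s), and then, given $f\in B_{L_p}$, selecting the block $J_i$ on which $\int_{J_i}|f|^p$ is largest; a direct estimate of $\|f - |J_i|^{-1/p}\mathbf 1_{J_i}\|_p$ via the triangle inequality in $L_p$ and convexity of $t\mapsto t^p$ should yield the bound $2^{1/p}$ in the limit $N\to\infty$. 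This mirrors precisely how $T(\ell_p)=2^{1/p}$ is proved, and the translation from $\ell_p$ to $L_p[0,1]$ is the content of the theorem; I would present the $\ell_p$ computation as the model and then indicate the (routine but not entirely trivial) measure-theoretic bookkeeping needed to port it to $L_p[0,1]$.
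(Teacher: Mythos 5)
There is a genuine gap: as written, neither inequality is actually proved, and the patches you propose do not close the holes you yourself identify. For the lower bound, the conditional-expectation argument fails for a structural reason: a norm-one projection $P$ sends the centres of the covering into the \emph{ball} of $\ell_p$, not its sphere, and the image centres can be arbitrarily close to $0$ (or equal to $0$); since a single ball of radius $1$ centred at $0$ already covers $B_{\ell_p}$, no estimate better than $1$ can be extracted from the projected covering, and ``rescaling'' a centre near the origin back to the sphere moves it by almost $1$, destroying the radius. Your second attempt (pairwise distances between disjointly supported normalized indicators plus pigeonhole) you correctly note gives only $2^{1/p-1}$. The missing step is to test the indicators against the net directly rather than against each other: by absolute continuity of the integral choose a set $A$ of small measure with $\int_A|f_i|^p<\varepsilon^p$ for every net element $f_i$, put $f=\chi_A/\mu(A)^{1/p}\in S_{L_p}$, and compute
$\|f-f_i\|_p^p=\int_{[0,1]\setminus A}|f_i|^p+\int_A|f-f_i|^p\geq(1-\varepsilon^p)+\bigl(\|f\|-\|\chi_A f_i\|\bigr)^p>(1-\varepsilon^p)+(1-\varepsilon)^p$.
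You had the right test vectors in hand (your $g_j$) but used them only for a two-point pigeonhole, which is why you landed on the weak constant.

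For the upper bound the decisive ingredient is a uniform-convexity inequality, which your sketch never names; this is exactly the step you flag as ``the main obstacle'' and leave unresolved. For $1\leq p\leq 2$, Clarkson's inequality $\|f+g\|^q+\|f-g\|^q\leq 2(\|f\|^p+\|g\|^p)^{q/p}$ (with $1/p+1/q=1$) shows that the two antipodal points $\{f_0,-f_0\}$ already form a $2^{1/p}$-net, so no dyadic machinery is needed at all in this range. For $p\geq 2$ one takes the $2n$ functions $\pm n^{1/p}\chi_{[(i-1)/n,\,i/n]}$ and, given $f\in S_{L_p}$, selects the block on which $\int|f|^p\leq 1/n$ --- the block of \emph{least} mass, not the one of largest mass as you propose --- and applies Hanner's inequality on that block to bound $\min\{\|f+f_i\|,\|f-f_i\|\}$; letting $n\to\infty$ yields $2^{1/p}$. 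A bare triangle-inequality-plus-convexity estimate on the largest block does not produce the sharp constant, and the case split at $p=2$ with the choice of the correct inequality (Clarkson versus Hanner) is the substance of the proof that your proposal is missing.
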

\begin{proof}

Denote by $I$ the interval $[0,1]$. Let $\{f_1, ..., f_n\}$ be a
finite subset of $S_X$. Take $0<\varepsilon<1$. By the absolute
continuity of integrals, there exists $\sigma >0$ such that\\

(1) $\quad \int_A |f_i|^p< \varepsilon^p$ \, (for   $ i=1, . . . ,n$)     \,
whenever $\mu(A)<\delta$.\\

Take $A\subset I$ according to (1) and let  $f= \frac {\chi(A)}{
(\mu(A))^{1/p}} \; (f \in S_X)$. We have (for $ i=1, . . . ,n$): $
|| f-f_i ||^p    = \int_{I-A}|f_i|^p  + \int_A  \big| \chi(A) \,(
\frac{1}{(\mu(A))^{1/p}} -f_i) \big| ^p  \geq 1-   \int_A |f_i|^p
+   ||f -\chi(A) \, f_i ||^p    > 1- \varepsilon^p + \big| || f||
-  || \chi(A) \, f_i || \,  \big| ^p > 1- \varepsilon^p +
(1-\varepsilon)^p$. Since $\varepsilon>0$ is arbitrary, this shows
that $T(L_p[0,1]) \geq 2^{1/p}$.\\

Let $1 \leq p \leq 2$ and recall Clarkson's inequality:
$$||f+g||^q + ||f-g||^q \leq 2(||f||^p + ||g||^p)^{q/p}  \; \mathrm{where} \; 1/p + 1/q =1.$$
Taking  $f_0, f \in S_X$ one has
$$||f+f_0||^q + ||f-f_o||^q  \leq 2 (||f||^p + ||f_0||^p)^{q/p} = 2^{1+q/p} = 2^q,$$
and thus
$$\min  \{||f+f_0||,  ||f-f_0||\} \leq \Big(\frac {2^q}{2}\Big)^{1/q}= 2^{
\frac{q-1}{q}} = 2^{1/p},$$ so $T(X) \leq 2^{1/p}$ and the result
is proved for $1 \leq p \leq 2$.\\

Let now $2\leq p < \infty$. For $i=1, \dots ,n$ consider the norm
one functions $ \pm f_1, \dots,\pm f_n$ with $f_i=n^{1/p} \,
\chi_{[ \frac{i-1}{n},
 \frac{i}{n}] }$  . Take any $f \in S_X$; there exists $i$ such that   $\int_{
[\frac{i-1}{n},  \frac{i}{n}] }|f|^p  \leq   \frac{1}{n}$ (since
$\sum_{i=1}^n\int_{ [\frac{i-1}{n},   \frac{i}{n}] } |f|^p =1)$.
Denote by  $I_f=[\frac{i-1}{n},   \frac{i}{n}]$ the interval
corresponding to $f$. Recall Hanner's inequality (see \cite{H}):
for $p \geq 2$ one has
$$||f+g||^p + ||f-g||^p     \leq (||f||+||g||)^p +\big |
||f||-||g|| \big | ^p. $$

Apply this to the space $L_p(I_f)$: consider the restrictions of
$f$ and the $f_i$ to $I_f$, that we still denote in the same way,
to obtain
$$||f_i+f||^p + ||f_i-f||^p  \leq (||f_i|| + ||f||)^p +\big |
||f_i|| - ||f|| \, \big |^p \leq  \big(1+\frac{1}{n^{1/p} }\big)^p +   \big(1-
\frac{1}{n^{1/p} }\big)^p;$$
thus
$$\min \Big\{ \int_{I_f} |f_i+f|^p, \,
\int_{I_f} |f_i-f|^p \Big\} \leq  \frac{1}{2} \Big[\big(1+\frac{1}{n^{1/p} }\big)^p +   \big(1- \frac{1}{n^{1/p} }\big)^p \Big].$$

Therefore:
$$\min \Big\{ \int_I |f_i+f|^p, \,  \int_I |f_i-f|^p \Big\} \leq  \frac{1}{2} \Big[ \big(1+\frac{1}{n^{1/p} }\big)^p +   \big(1- \frac{1}{n^{1/p} }\big)^p \Big]+1;$$
 and then
$$\min  \big\{\min \{ ||f_i+f||, \, ||f_i-f|| \}: \,  i=1, . . . ,n  \big\} \leq \Big( \frac{1}{2}  \big[  (1+\frac{1}{n^{1/p} })^p +   (1- \frac{1}{n^{1/p} })^p \big] +1 \Big)^{1/p}.$$
Since we can take $n$ arbitrarily large, we obtain $T(L_p[0,1])
\leq 2^{1/p}$, which concludes the proof.
\end{proof}

\section {Whitley's constant in product spaces  }

Whitley's constant is strongly geometric, hence it is not strange that
thickness constants of  $X \oplus_p Y$ can be different for different values of $p$.
A bit more surprising is that the thickness constant of a product space $\ell_p(X_n)$
also depends on whether there is a  finite or infinite number of factors: indeed,
it follows from next theorem (part (1))  that  $T(c_0\oplus_2 c_0)=1$, while it follows
from Corollary 1 below  that $T(\ell_2(c_0))= \sqrt 2$.

\begin{theorem} \label{Sum 2} Let $1\leq p\leq \infty$.\begin{enumerate}

\item\label{uno} $T(X_1 \oplus_p \cdots \oplus_p X_N) \leq \max
\{T(X_n), \; 1\leq n\leq N \};$

\item\label{dos} $2^{1/p}\leq T(\ell_p(X_n)) \leq \left( \inf
\{T(X_n)\}^p+1\right )^{1/p}$, for $1\leq p<\infty$. The upper
estimate is also valid for finite sums.

\item\label{tres} $T(\ell_\infty(X_n)) = \inf \{T(X_n)\}$.
\end{enumerate}
\end{theorem}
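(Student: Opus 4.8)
The plan is to treat the three items separately; the upper bounds will come from an explicit finite family in the sphere built out of the factors, and the lower bounds from producing a point of the ball that no ball of the alleged cover reaches.

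\textbf{(1)} Fix $\delta>0$, and for each $n$ choose a finite $(T(X_n)+\delta)$-net $F_n\subset S_{X_n}$ of $B_{X_n}$ (which exists by definition of $T(X_n)$) together with a finite $\delta$-net $G\subset\Delta$ of the compact ``profile set'' $\Delta=\{(\beta_1,\dots,\beta_N):\beta_n\ge 0,\ \sum_n\beta_n^{\,p}=1\}$ (replace $\sum_n\beta_n^{\,p}=1$ by $\max_n\beta_n=1$ when $p=\infty$). As candidate net for $X=X_1\oplus_p\dots\oplus_p X_N$ I would take $\mathcal N=\{(\alpha_1u_1,\dots,\alpha_Nu_N):(\alpha_n)\in G,\ u_n\in F_n\}\subset S_X$. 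Given $y\in B_X\setminus\{0\}$, put $\widehat y=y/\|y\|$ and $\widehat a_n=\|\widehat y_n\|$. The decisive move — the one that produces $\max_nT(X_n)$ rather than the weaker bound of (2) — is to approximate the \emph{direction} $\widehat y_n/\widehat a_n\in S_{X_n}$ by some $u_n\in F_n$ and then rescale by $\alpha_n\approx\widehat a_n$, which gives $\|\alpha_nu_n-\widehat y_n\|\le\widehat a_nT(X_n)+2\delta\le\widehat a_nM+2\delta$ with $M=\max_nT(X_n)\ge 1$. Raising to the $p$-th power, summing over $n$, and using $\sum_n\widehat a_n^{\,p}=1$ bounds the distance from $\widehat y$ to the corresponding $w\in\mathcal N$ by $(M^p+C\delta)^{1/p}$ for some $C=C(N,p,M)$ (by $M+2\delta$ if $p=\infty$). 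For a general interior point $y=r\widehat y$ one then uses that $r\mapsto\|w-r\widehat y\|^p$ is convex on $[0,1]$, equals $1$ at $r=0$ and is $\le M^p+C\delta$ at $r=1$, hence stays $\le M^p+C\delta$ throughout since $M\ge 1$. Letting $\delta\to0$ gives $T(X)\le M$.

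\textbf{(2)} For the upper bound fix $m$ and a $(T(X_m)+\delta)$-net $\{u_i\}\subset S_{X_m}$ of $B_{X_m}$; the family of those $z^{(i)}\in S_{\ell_p(X_n)}$ having $u_i$ in coordinate $m$ and $0$ elsewhere works, since for $y\in B_{\ell_p(X_n)}$ one may choose $u_i$ with $\|y_m-u_i\|\le T(X_m)+\delta$, whence $\|y-z^{(i)}\|^p=\|y_m-u_i\|^p+\sum_{n\ne m}\|y_n\|^p\le(T(X_m)+\delta)^p+1$. Taking the infimum over $m$ and $\delta\to0$ gives $T(\ell_p(X_n))\le(\inf_nT(X_n)^p+1)^{1/p}$; this argument uses no infinitude, so it also applies to finite sums. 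For the lower bound $T(\ell_p(X_n))\ge 2^{1/p}$, suppose $B_{\ell_p(X_n)}\subset\bigcup_{j=1}^kB(z^{(j)},\rho)$ with $z^{(j)}\in S_{\ell_p(X_n)}$, fix any $a_n\in S_{X_n}$, and note that the vector with $a_n$ in coordinate $n$ and $0$ elsewhere lies in the sphere, hence is covered by some $z^{(j(n))}$, giving $\|a_n-z^{(j(n))}_n\|^p\le\rho^p-1+\|z^{(j(n))}_n\|^p$ (using $\sum_m\|z^{(j(n))}_m\|^p=1$). Some index $j^\ast$ is used for infinitely many $n$, and since $\sum_n\|z^{(j^\ast)}_n\|^p=1$ we have $\|z^{(j^\ast)}_n\|\to0$ along those $n$; comparing with $\|a_n-z^{(j^\ast)}_n\|\ge1-\|z^{(j^\ast)}_n\|$ and letting $n\to\infty$ forces $1\le\rho^p-1$, i.e.\ $\rho\ge2^{1/p}$.

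\textbf{(3)} The bound $T(\ell_\infty(X_n))\le\inf_nT(X_n)$ is proved like the upper bound of (2), now using $\|y-z^{(i)}\|_\infty=\max(\|y_m-u_i\|,\sup_{n\ne m}\|y_n\|)\le\max(T(X_m)+\delta,1)=T(X_m)+\delta$, which needs $T(X_m)\ge1$. For the reverse inequality, put $L=\inf_nT(X_n)$ and suppose $B_{\ell_\infty(X_n)}\subset\bigcup_{j=1}^kB(z^{(j)},\rho)$ with $z^{(j)}\in S_{\ell_\infty(X_n)}$ and $\rho<L$; since $0$ must be covered while $\|z^{(j)}\|=1$, necessarily $\rho\ge1$, so the case to rule out is $1\le\rho<L\le2$. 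The engine is the identity $\sup\{\|u-w\|:u\in S_{X_n}\}=1+\|w\|$ (triangle inequality and Hahn--Banach), which shows that whenever $\|z^{(j)}_n\|>\rho-1$ there is $u\in S_{X_n}$ with $\|u-z^{(j)}_n\|>\rho$. Writing $G_j=\{n:\|z^{(j)}_n\|>\rho-1\}$: for each $j$ with $G_j$ infinite I would pick greedily a fresh coordinate $n(j)\in G_j$ and a unit vector $u^{(j)}\in S_{X_{n(j)}}$ with $\|u^{(j)}-z^{(j)}_{n(j)}\|>\rho$; for each $j$ with $G_j$ finite, since $\|z^{(j)}_n\|\le\rho-1<1$ off $G_j$ while $\sup_n\|z^{(j)}_n\|=1$, this supremum is attained at some $m(j)\in G_j$, and then the finite set $\{z^{(j)}_m:G_j\text{ finite},\ m(j)=m\}\subset S_{X_m}$ is — because $\rho<L\le T(X_m)=T_W(X_m)$ by Lemma~\ref{uguale} (note $T(X_m)>1$ forces $X_m$ infinite-dimensional) — not a $\rho$-net of $S_{X_m}$, so some unit vector of $X_m$ misses all of its elements. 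Placing these escape vectors in the corresponding (pairwise distinct) coordinates and $0$ elsewhere yields $y\in S_{\ell_\infty(X_n)}$ with $\|y-z^{(j)}\|_\infty>\rho$ for every $j$, contradicting the cover; hence $\rho\ge L$ and $T(\ell_\infty(X_n))=L$.

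The routine bits should be quick: the explicit form of $C$ and the convexity check in (1), the arithmetic in (2), and the verification in (3) that the assembled point has norm exactly $1$ and avoids every ball. The step I expect to be the main obstacle is the lower bound in (3): keeping the bookkeeping straight in the split between the $z^{(j)}$'s with infinitely many versus finitely many coordinates exceeding the threshold $\rho-1$, and, for the latter class, using Lemma~\ref{uguale} to convert the strict inequality $\rho<T(X_m)$ into the statement that finitely many $S_{X_m}$-centered balls of radius $\rho$ cannot cover $S_{X_m}$.
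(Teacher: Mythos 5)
Your treatment of (1), (2), and the upper bound in (3) follows essentially the paper's own route: for (1), a net of directions in each factor combined with a finite net of norm-profiles (the paper nets the whole unit ball of $\ell_p^N$ rather than only the profile sphere, which lets it skip your radial convexity step, but the effect is identical); for the upper bounds in (2)--(3), the same single-coordinate net $(0,\dots,u_i,\dots,0)$; and for the lower bound in (2), the same exploitation of the fact that sphere-normed centers have uniformly small tails (the paper picks one tail coordinate where all centers are simultaneously small and tests there, you test every coordinate and pigeonhole --- same phenomenon, equally valid). The genuine divergence is the lower bound in (3). The paper fixes $\alpha$ with $T(\ell_\infty(X_n))<\alpha<\inf_n T(X_n)$, introduces the index sets $I_n(\varepsilon)=\{i:\|z_i(n)\|\ge 1-\varepsilon\}$, uses that the normalized coordinates $z_i(n)/\|z_i(n)\|$, $i\in I_n(\varepsilon)$, cannot form an $(\alpha+\varepsilon')$-net for $B_{X_n}$, and then builds an escape element coordinatewise; as printed it produces a separate escape element for each center $z_i$ (the scaling $x(n)=\|z_i(n)\|x_n$ depends on $i$). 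You instead split the centers according to whether infinitely or finitely many coordinates exceed $\rho-1$, dispatch the first class with the elementary identity $\sup\{\|u-w\|:u\in S_{X_n}\}=1+\|w\|$ applied in freshly chosen coordinates, dispatch the second with the fact that finitely many radius-$\rho$ balls centered on $S_{X_m}$ cannot cover $S_{X_m}$ when $\rho<T(X_m)=T_W(X_m)$ (via Lemma~\ref{uguale}, legitimately invoked since $\rho\ge 1$ forces $T(X_m)>1$ and hence $\dim X_m=\infty$), and then assemble a single unit vector escaping every center simultaneously --- which is exactly what the contradiction requires. Your version costs more bookkeeping (the fresh-coordinate selection must also avoid the finitely many Case-B coordinates, which the infinitude of the relevant $G_j$ permits), but it is self-contained and arguably tighter on the one-witness-versus-many issue; both arguments ultimately rest on the same fact that a norm-one element of $\ell_\infty(X_n)$ must have some coordinate of norm close to $1$.
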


\begin{proof} To prove (1), assume $p<\infty$; indeed,
 for $p=\infty$ it is contained in (3). Let us
call,  just for simplicity,  $Z= X_1 \oplus_p \cdots \oplus_p X_N$.
Given $\varepsilon>0$, let $\{x_1^n, ..., x_{k_n}^n\}$ ($n=1, . . . ,N)$ be a
$(T(X_n)+\varepsilon)-$net for $B_{X_n}$ with $\|x_i^n\|=1$. Take
in $(\R^N, \|\cdot\|_p)$ a finite $\varepsilon$-net
$\{(\lambda_1^k, \cdots, \lambda_N^k), \, 1\leq k \leq M\}$ for
its unit ball with $\sum_{j=1}^{N}|\lambda_j^k|^p=1$ for every
$k$. Consider all points $(\lambda_1^k x_{j_1}^1, \dots,
\lambda_N^k x_{j_N}^N)$ with $1\leq k \leq M$, $1\leq j_n\leq
k_n$, $1\leq n\leq N$. They form a finite subset of norm one
points of $Z$. Let us show they form a \, $\max \{T(X_n)+2\varepsilon:
\; 1\leq n\leq N \}$-net. Take $z=(z_1, \dots, z_N)\in B_Z$.
Choose an index $k(z)$ such that
$$\big\|(\|z_1\|,\dots, \|z_N\| ) - (\lambda_1^{k(z)}, \dots,
\lambda_N^{k(z)}) \big\|_p\leq\varepsilon.$$

Also, for each $n$ choose some index $i_n$ so that
$$\Big\|  \frac{z_n}{\|z_n\|} - x_{i_n}^n\Big\|_{X_n} \leq T(X_n)+\varepsilon.$$

Thus
\begin{eqnarray*} \big\|z - (\lambda_n^{k(z)}x_{i_n}^n)_n\big\|_Z &\leq& \big\|z
-(\|z_n\|x_{i_n}^n)_n\big\|_Z +  \big\| (\|z_n\|x_{i_n}^n)_n -
(\lambda_n^{k(z)}x_{i_n}^n)_n\big\|_Z\\ & \leq&
\big\|\|z_n\| \, (T(X_n)+\varepsilon)_n\big\|_p+\varepsilon\\
&\leq& \max \{T(X_n)\} + 2\varepsilon .\end{eqnarray*}

Since $\varepsilon>0$ is arbitrary, this proves (\ref{uno}).

\smallskip
The lower estimate in (\ref{dos}) is as follows. Let
$y_i=(y_i(n))_n$ be for $i=1, 2, ..., k$ a finite set of elements
of the unit sphere of $Y=\ell_p(X_n)$. Given $\varepsilon >0,$ let
$j$ be such that $||y_i(n)||_{X_i}< \varepsilon$ for $n\geq j $
and  all $i$. Take a norm one element $x\in X_j$ and form the
element $y=(0, \dots, 0, x, 0, \dots, 0))$ with $x$ at the j-th
position. One then has
\begin{eqnarray*}
\|y_i-y\|^p_Y &=& \|(y_i^{(1)}, ..., y_i^{(j-1)},y_i^{(j)}- x,
\;, y_i^{(j+1)}, ... )\|^p_Y\\
&=&\| (y_i^{(1)}\|^p_{X_1}+ ... + \|y_i^{(j-1)}\|^p_{X_{j-1}} +
\|y_i^{(j)}- x\|^p_{X_j} + \|(y_i^{(j+1)}\|^p_{X_{j+1}}+....\\
&=&1-\|y_i^{(j)}\|^p_{X_ j}+\|y_i^{(j)}- x\|^p_{X_j}\\
& >& 1-\varepsilon^p +|1-\varepsilon|^p.\end{eqnarray*}

This proves that $(T(\ell_p(X_n))^p\geq 1-\varepsilon^p
+|1-\varepsilon|^p$. Since $\varepsilon>0$ is arbitrary, the
result follows.

\smallskip
To obtain the upper estimate in (\ref{dos}), given $\varepsilon>0$ \, fix
$m$ and let $\{u_1, ..., u_t\}$ be a $(T(X_m)+\varepsilon)$-net
for $B_{X_m}$ with $||u_i||=1$ for all $1\leq i \leq t$. Consider
as a net for the unit ball of $Y=\ell_p(X_n)$ the points $v_i=
(0,\dots, u_i, \dots, 0)$, for $1\leq i\leq t$ ($u_i$ is in the $m$-th position). If $(x_n) \in
B_Y$ then in particular $||x_m||_{X_m}\leq 1$; fix
$i$ so that $\|x_m - u_i\|_{X_m} \leq T(X_m) +\varepsilon$. If
$p<\infty$, then $\|(x_n)- v_i\|_Y^p \leq \|x_m- u_i\|_Y^p+ 1
\leq(T(X_m)+\varepsilon)^p +1.$ This proves that
$T(\ell_p(X_n))^p\leq (T(X_m))^p+1$. Since $m$ is
arbitrary, the upper estimate follows.\\

The upper estimate in (\ref{tres}) is immediate from the arguments
above since when $p=+\infty$ one gets $T(\ell_p(X_n))\leq \max
\{T(X_m), 1\}=T(X_m)$. For the lower estimate, assume that
$T(\ell_\infty(X_n)))< \inf\{T(X_n)\}.$ Take $\varepsilon'>0$ and
$\alpha$ such that
$$T(\ell_\infty(X_n))<\alpha- \varepsilon' <\alpha < \alpha +
\varepsilon'< \inf\{T(X_n)\}$$ and fix $\varepsilon$ so that $
(1-\varepsilon)(\alpha + \varepsilon')> \alpha$. Take a finite
$\alpha$-net $\{z_1, \dots ,z_t\}$ for $B_{\ell_\infty(X_n)}$
verifying $\|z_i\|=1$ for each $i$. This in particular means that for each $i$,
given $\varepsilon>0$ there is some index $n_\varepsilon$ for
which $1-\varepsilon \leq ||z_i(n_\varepsilon)||_{X_ {n_\varepsilon}}\leq 1$. Set
$I_n(\varepsilon)=\{i: \, ||z_i(n)||_{X_ {n}}\geq 1-\varepsilon\}$. The
elements  $z_i(n)/\|z_i(n)\|, \, i\in I_n(\varepsilon)$ cannot
form an $(\alpha+ \varepsilon')$-net for $B_{X_n}$ and thus there must be  $x_n\in
B_{X_n}$ such that $||\frac{z_i(n)}{\|z_i(n)\|} -x_n||
> \alpha+\varepsilon'$ for all $i \in I_n(\varepsilon)$. Since $\bigcup_n
I_n(\varepsilon) = \{1,\dots, t\}$, for each $i\in \{1, \dots,
t\}$ there is some $n$ so that $i\in I_n(\varepsilon)$. Form (for each $i$) (one of) the
element(s) $x\in\ell_\infty(X_n)$ as $x(n)= \|z_i(n)\|x_n$ to get
the contradiction:

$$\| z_i - x\|_{\ell_\infty {(X_n)}} = \sup_n \left\{\|z_i(n)\| \left\|\frac{z_i(n)}{\|z_i(n)\|} -
x_n\right\|_{X_n} \right \} > (1-\varepsilon)(\alpha + \varepsilon')> \alpha.$$
 \end{proof}

As a consequence of (2) in the previous theorem we obtain.

\begin{cor} Let $X_n$ be a family of Banach spaces so that
$T(X_n)=1$ for at least one index $i$. Then $T(\ell_p(X_n)) =
2^{1/p}$.
\end{cor}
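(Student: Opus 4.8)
The plan is to derive the corollary directly from part (2) of Theorem~\ref{Sum 2}. Recall that part (2) states
$$2^{1/p}\leq T(\ell_p(X_n)) \leq \left( \inf \{T(X_n)\}^p+1\right )^{1/p}.$$
The hypothesis gives $T(X_i)=1$ for some index $i$; since every Banach space $X$ satisfies $T(X)\geq 1$, this means $\inf\{T(X_n)\}=1$. Substituting this value into the upper estimate yields $T(\ell_p(X_n))\leq (1^p+1)^{1/p}=2^{1/p}$. Combined with the lower estimate $T(\ell_p(X_n))\geq 2^{1/p}$ that is always available from part (2), both inequalities collapse to the equality $T(\ell_p(X_n))=2^{1/p}$.

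The only point requiring a word of care is the claim that $\inf\{T(X_n)\}=1$; this uses both that one of the terms of the sequence equals $1$ (so the infimum is at most $1$) and that no term can be smaller than $1$ (so the infimum is at least $1$). The latter is the general fact $T(X)\in[1,2]$ for infinite-dimensional $X$, noted in the introduction, together with $T(X)=1$ for finite-dimensional $X$; in either case $T(X)\geq 1$. I would state this in one sentence rather than belabor it.

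There is no real obstacle here: the corollary is a substitution into an already-proved two-sided estimate. If anything, the mild subtlety is purely bookkeeping — making sure the index $i$ realizing $T(X_i)=1$ is legitimately used to bound the infimum from above, and that the lower bound $2^{1/p}$ from part (2) does not itself require any hypothesis on the $T(X_n)$ (it does not, as its proof only uses that the tails $\|y_i(n)\|$ are eventually small). I would therefore keep the argument to two or three lines.
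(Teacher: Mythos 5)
Your proof is correct and is exactly the derivation the paper intends: the corollary is introduced with ``As a consequence of (2) in the previous theorem,'' i.e., one substitutes $\inf\{T(X_n)\}=1$ into the upper estimate and combines it with the lower estimate $2^{1/p}$, just as you do. The only implicit point, which you handle correctly, is that $T(X)\geq 1$ always, so the single index with $T(X_i)=1$ forces the infimum to equal $1$.
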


It is simple to see that estimates (1) and (2) (right inequality)
in  Theorem 2 are independent for $1< p<\infty$: for example,
consider a pair of spaces $X$, $Y$ with $T(X)=1$: if $T(Y)=1$,
then (1) is better; if $T(Y)=2$, then (2) is better. The upper
estimate in (2) is meaningful only if  \, min$\{T(X), T(Y)\}<
(2^p-1) ^{1/p}$. Both estimates are sharp: see Proposition 1 below
(where they coincide if we take $p=2$ and $T(Y)=1$; also, if $p=2$
and $T(Y)=3/2$, then we have strict inequality in both (1) and (2)
). According to Corollary 1, the estimate in (\ref{uno}) can fail
for infinite sums (also, we can observe that $T(\ell_p)=2^{1/p}$
and $T(\R)=1$).  The same corollary shows that, in general, one
can have $T(Y)>$ sup$\{T(X_n): n \in N\}$. Corollary 1 is not true
for the sum of two spaces: for example, according to (1) in
Theorem 2, $T(c_0 \oplus_1 c_0)=1$. This also shows that the lower
estimate in (2) of Theorem 2 does not apply in general to finite
sums. The same corollary shows that, in general, one can have
$T(Y)>$
sup$\{T(X_i): i \in N\}$.\\

The aim of the following example is twofold: first, it shows that
for $1<p<\infty$ one can have $T(X \oplus_p Y)>2^{1/p}$. Then, it
shows that it is possible to have $T(X\oplus_p Y)< \min \{(T(X),
T(Y)\}$.\\

\begin{lemma} $T(\ell_1\oplus_2 \ell_1)
=\sqrt{2+\sqrt2}<2$.\end{lemma}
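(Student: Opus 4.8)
The plan is to compute the thickness constant of $\ell_1\oplus_2\ell_1$ by squeezing it between matching upper and lower bounds, both equal to $\sqrt{2+\sqrt2}$. Since $T(\ell_1)=2$, the upper estimate from Theorem \ref{Sum 2}(2) only gives $T(\ell_1\oplus_2\ell_1)\le\sqrt5$, which is too weak, so a direct geometric argument is needed. For the \emph{lower bound}, I would imitate the net-lower-bound arguments used repeatedly above: take a finite family $\{y_1,\dots,y_k\}$ of norm-one vectors $y_i=(a_i,b_i)\in\ell_1\oplus_2\ell_1$, use the fact that in each $\ell_1$-coordinate one can push the mass of finitely many vectors onto disjoint supports far out in the sequence (so all $a_i$ and $b_i$ are, up to $\varepsilon$, "small" on some tail block), and then place a test vector $y=(tu,\sqrt{1-t^2}\,v)$ with $u,v$ unit vectors supported on that disjoint tail block, choosing the split parameter $t\in[0,1]$ to maximize $\min_i\|y_i-y\|$. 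Because supports are essentially disjoint, $\|y_i-y\|^2\approx \|a_i\|^2 - ? $; more precisely, disjointness of supports in $\ell_1$ makes $\|a_i-tu\|_{\ell_1}=\|a_i\|+t$ and $\|b_i-\sqrt{1-t^2}\,v\|_{\ell_1}=\|b_i\|+\sqrt{1-t^2}$, so with $\|a_i\|^2+\|b_i\|^2=1$ one is led to minimize/maximize an expression in $\|a_i\|$ and $t$; the worst adversary value and the optimal $t$ should produce exactly $2+\sqrt2$ after squaring.

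For the \emph{upper bound}, I would exhibit an explicit finite net on the sphere achieving $\sqrt{2+\sqrt2}$. The natural candidate mirrors the construction proving $T(\ell_1)=2$: since $\ell_1$ has "flat" spots, fix a large $N$ and take the vectors $\pm e_j$ ($j\le N$) in each coordinate, and combine them across the two factors with weights $(\cos\theta,\sin\theta)$ from a fine net of angles $\theta$, i.e. vectors of the form $(\cos\theta)(\pm e_j)\oplus(\sin\theta)(\pm e_\ell)$. Given an arbitrary $(a,b)\in B_{\ell_1\oplus_2\ell_1}$, pick in the $\ell_1$-part $a$ a coordinate $j$ where $|a_j|$ is tiny (possible for $N$ large since $\sum|a_j|\le\|a\|$ is finite... actually since $a\in\ell_1$ its tail coordinates tend to $0$), similarly $\ell$ for $b$, and choose $\theta$ close to the angle of $(\|a\|,\|b\|)$; then estimate $\|(a,b)-(\cos\theta)(\varepsilon_j e_j)\oplus(\sin\theta)(\varepsilon_\ell e_\ell)\|^2$. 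The key point: in the $\ell_1$-coordinate, subtracting a signed unit vector $\varepsilon_j e_j$ adjusted to match the sign of $a_j$ contributes $\|a\| - |a_j| + |\,\cos\theta - |a_j|\,| \approx \|a\| + \cos\theta$ when $|a_j|$ is negligible — here the "worst case" is an adversary forced at value $2\cos\theta$ in one slot; choosing $\theta$ optimally to balance the two $\ell_1$-contributions under the $\ell_2$-combination gives the maximum of $(\|a\|+\cos\theta)^2+(\|b\|+\sin\theta)^2$ over $\|a\|^2+\|b\|^2\le1$, whose value over the worst $(\|a\|,\|b\|)$ I expect to be $2+\sqrt2$ exactly (attained near $\|a\|=\|b\|=1/\sqrt2$, $\theta=\pi/4$).

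The main obstacle I anticipate is the \emph{optimization bookkeeping}: one must correctly identify that the extremal test/target configuration has equal weights $1/\sqrt2$ in both the two factors and in the two $\ell_1$-directions, and then verify that $\bigl(1/\sqrt2+1/\sqrt2\bigr)^2$-type terms really sum to $2+\sqrt2$ rather than something else — this requires care because the $\ell_1$ "add the norms" phenomenon and the $\ell_2$ "combine by Pythagoras" phenomenon interact, and a naive guess could give $\sqrt3$ or $2$. A secondary subtlety is making the disjoint-support reductions rigorous in the infinite-dimensional $\ell_1$: for the lower bound one genuinely uses that finitely many $\ell_1$-vectors have $\varepsilon$-small total mass on a common tail, which is where infinite-dimensionality is essential (consistent with the remark that such lower estimates fail for finite sums). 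Once the extremal constant is pinned down on both sides, the inequality $\sqrt{2+\sqrt2}<2$ is just $2+\sqrt2<4$, i.e. $\sqrt2<2$, which is clear, and shows in particular that this $\ell_2$-product of two nonreflexive spaces with $T=2$ has thickness strictly below $2$ — a phenomenon worth isolating.
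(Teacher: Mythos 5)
Your lower bound is sound and, once the optimization is carried out, lands exactly on the paper's construction: with
$\|y_i-y\|^2\approx(\|a_i\|+t)^2+(\|b_i\|+\sqrt{1-t^2}\,)^2
=2+2\bigl(t\|a_i\|+\sqrt{1-t^2}\,\|b_i\|\bigr)$,
the adversary's minimum over the arc $\|a_i\|^2+\|b_i\|^2=1$ is $2+2\min\{t,\sqrt{1-t^2}\}$, which is maximized at $t=1/\sqrt2$, giving $2+\sqrt2$. (The paper tests with the single coordinate $e_k$ on which all net components are $\le\varepsilon$, rather than exactly disjoint supports, but this is the same idea and both are rigorous.)

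The upper bound, however, contains a genuine error in the selection rule. By the very ``add the norms'' phenomenon you exploit in the lower bound, a net point $(\cos\theta)(\pm e_j)\oplus(\sin\theta)(\pm e_\ell)$ with $j,\ell$ in positions where $a,b$ are negligible sits at squared distance
$\approx(\|a\|+|\cos\theta|)^2+(\|b\|+|\sin\theta|)^2
=1+\|a\|^2+\|b\|^2+2\bigl(\|a\|\,|\cos\theta|+\|b\|\,|\sin\theta|\bigr)$
from $(a,b)$. Choosing $\theta$ \emph{aligned} with $(\|a\|,\|b\|)$ \emph{maximizes} the cross term and yields only the useless bound $4$, i.e.\ distance $2$; in your claimed extremal configuration $\|a\|=\|b\|=1/\sqrt2$, $\theta=\pi/4$ the value is exactly $4$, not $2+\sqrt2$. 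The correct move is the opposite of ``balancing'': minimize the cross term by taking $\theta\in\{0,\pi/2\}$, i.e.\ concentrate the net point entirely in the factor where the test vector has the \emph{smaller} mass, which gives squared distance $2+2\min\{\|a\|,\|b\|\}\le 2+\|a\|+\|b\|\le 2+\sqrt2$. This is precisely what the paper does: its net is just the four axis points $(\pm e_1,0)$, $(0,\pm e_1)$ --- no tail coordinates or intermediate angles are needed, since the triangle inequality $\|x\mp e_1\|_1\le\|x\|_1+1$ holds for every $x$ --- and it bounds the minimum by the average of $2+2\|a\|$ and $2+2\|b\|$. Your larger net would still work, but only because it contains those four points; the added angles are never the nearest net points, so as written the argument proves only $T\le 2$.
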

\begin{proof} Let $Z=\ell_1\oplus_2 \ell_1$. Consider the first element, $e_1$,  of
the natural basis in $\ell_1$; take in $Z$ the four points
$z_1=(e_1,0); \; z_2=(-e_1,0); \; z_3=(0,e_1); \; z_4=(0,-e_1)$.
Let  $z=(x,y)\in S_Z;  \; ||x||_1=a; \; ||y||_1=b;\; a^2+b^2=1$;
this implies $1 \leq a+b \leq \sqrt2$. We want to prove that
$\min_{i=1,2,3,4} ||z-z_i||_Z \leq \sqrt{2+\sqrt2}$. One has
$$\min_{i=1,2} \|z-z_i\|^2_Z \leq (1+a)^2+b^2; \;\;\min_{i=3,4}
||z-z_i||^2_Z \leq a^2+(1+b)^2;$$ therefore $$\min_{i=1,2,3,4}
||z-z_i||^2_Z\leq \big((1+a)^2+ b^2+ a^2+(1+b)^2\big)/2 = 2+a+b$$
and thus
$$T(Z) \leq   \sup_{z \in S_Z} \,
 \min_{i=1,2,3,4} ||z-z_i||_Z\leq
\sqrt{2+a+b}\leq \sqrt{2+\sqrt2}.$$
Now assume that $(x_i, y_i),
..., (x_n, y_n)$ is a finite net of norm one elements for $B_Z$.
Given $\varepsilon>0$, there exists $k$  large enough such that
all sequences $(x_i), (y_i)$ have the $k^{th}$ component, in
modulus, smaller than or equal to $\varepsilon$. Take $(x,y)\in
S_Z$, such that both $x$ and $y$  have all components equal to
$0$, except the $k^{th}$ component  equal to $1/\sqrt2$. One has
that for all $i$:
\begin{eqnarray*}||(x,y)-(x_i,y_i)||_Z^2&=& ||x-x_i||_1^2+||y-y_i||_1^2 \\
&=& (a-|x_k|+|\frac{1}{\sqrt2}-x_k|)^2+
(b-|y_k|+|\frac{1}{\sqrt2}-y_k|)^2\\ &\geq&
(a-\varepsilon+\frac{1}{\sqrt2}-\varepsilon)^2 +
(b-\varepsilon+\frac{1}{\sqrt2}-\varepsilon)^2.\end{eqnarray*}

Since $\varepsilon$ is arbitrary, this proves that $$T(Z)\geq
\sqrt{ a^2+b^2+1+ \sqrt2(a+b)}
 \geq \sqrt{2+\sqrt2} \, ,$$
and the assertion follows.
\end{proof}
\begin{prop}

 Let    $Z = \ell_p \oplus_p Y$,  $1 \leq p < \infty$;  then  $T(Z) =
2^{1/p}$.
\end{prop}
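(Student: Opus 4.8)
The key observation is that $\ell_p$ has a special feature not available in a generic space with $T=2^{1/p}$: it is isometric to $\mathbb{R}\oplus_p\ell_p$, so $Z=\ell_p\oplus_p Y$ is isometric to $\mathbb{R}\oplus_p(\ell_p\oplus_p Y)=\mathbb{R}\oplus_p Z$. The plan is to exploit this self-similarity to push the upper estimate in Theorem~\ref{Sum 2}(2) below what it gives directly. First I would record the lower bound: since $Z=\ell_p(X_n)$ for a suitable sequence of factors — or more simply, since $Z$ contains an isometric copy of $\ell_p$ sitting on disjointly supported blocks — one applies the lower estimate in Theorem~\ref{Sum 2}(2), or directly repeats its $\varepsilon$-net argument (choose a unit vector $e_k$ in $\ell_p$ far out in the basis so that all the finitely many net points have $k$-th block of norm $<\varepsilon$, then $(e_k,0)$ is $p$-almost-orthogonal to every net point), to conclude $T(Z)\geq 2^{1/p}$.

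For the upper bound I would argue that $T(Z)\leq (\,T(Z)^p\ominus\text{something}\,)$ is not quite the route; instead use Theorem~\ref{Sum 2}(2) applied to the decomposition $Z\cong\mathbb{R}\oplus_p Z$. That theorem's upper estimate, valid for finite sums, gives $T(\mathbb{R}\oplus_p Z)\leq(\min\{T(\mathbb{R}),T(Z)\}^p+1)^{1/p}=(1+1)^{1/p}=2^{1/p}$, since $T(\mathbb{R})=1$. Combined with the isometry $Z\cong\mathbb{R}\oplus_p Z$, this yields $T(Z)\leq 2^{1/p}$ immediately, and together with the lower bound gives $T(Z)=2^{1/p}$.

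Concretely, spelling out the upper estimate directly in $Z$: given $\varepsilon>0$, take a $(1+\varepsilon)$-net — in fact just $\{1,-1\}$ works as a $1$-net for $B_{\mathbb{R}}$ — so consider the two points $v_{\pm}=(\pm e_1,0)\in S_Z$ (first basis vector of $\ell_p$). For any $z=(x,y)\in B_Z$ write $x=(x_1,x')$ with $x_1\in\mathbb{R}$ and $x'$ the tail; then $\|z\mp v_{\pm}\|_Z^p=|x_1\mp 1|^p+\|x'\|_p^p+\|y\|_p^p=|x_1\mp1|^p+(\|z\|^p-|x_1|^p)\leq |x_1\mp1|^p+1-|x_1|^p$, and $\min_{\pm}(|x_1-1|^p+|x_1+1|^p\text{-type bound})$ over the sign gives $\min\{|x_1-1|^p,|x_1+1|^p\}\leq 1$ (since $\max\{|x_1-1|,|x_1+1|\}\geq 1$ forces the min $\leq 1$ for $|x_1|\leq 1$). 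Hence $\min\{\|z-v_+\|_Z,\|z-v_-\|_Z\}\leq(1+1)^{1/p}=2^{1/p}$, so $\{v_+,v_-\}$ is a $2^{1/p}$-net for $B_Z$ and $T(Z)\leq 2^{1/p}$.

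The main obstacle, such as it is, is not in the estimates — which are short — but in being careful that the relevant inequalities in Theorem~\ref{Sum 2}(2) are the ones valid for a two-term finite sum (the upper estimate is explicitly stated to hold for finite sums, and $T(\mathbb{R})=1$ since $\mathbb{R}$ is one-dimensional), and that no hypothesis there requires the factors to be infinite-dimensional. One should also note that $T(\ell_p)=2^{1/p}$ itself, recalled in the introduction, is the $Y=0$ case (or $Y=\mathbb{R}$, up to the same self-similarity trick), so the proposition is a clean uniform-in-$Y$ sharpening.
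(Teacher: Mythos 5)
Your proof is correct and essentially coincides with the paper's: the paper proves the upper bound with the same two-point net $\{(e_1,0),(-e_1,0)\}$ (showing $\min\|{\pm}e_1-x\|^p\leq 1+a^p$) and the lower bound by the same choice of $(e_j,0)$ with $j$ beyond the essential support of the given net. Your packaging of the upper bound as the finite-sum estimate of Theorem \ref{Sum 2}(\ref{dos}) applied to the isometry $Z\cong\mathbb{R}\oplus_p Z$ with $T(\mathbb{R})=1$ is just a tidy reformulation of that same computation.
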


\begin{proof}
Set  $X=\ell_p$; let $z=(x,y)\in B_Z:  x\in\ell_p;\, y\in Y; \, a^p+b^p=1$
where $ a^p=||x||_X^p; \;  b^p=||y||_Y^p$.
Consider the net given by the two points in $S_Z: z_1=(e_1,0); \,
z_2=-z$. In $\ell_p$ we have either $||e_1-x||_X^p\leq 1+a^p$ or
$||-e_1-x||_X^p\leq1+a^p$. Thus $||z_i-z||_Z^p=||\pm e_1-x||_X^p+
||y||_Y^p \leq 1+a^p+b^p$ \,for either $i=1$ or $i=2$.   This
proves that $T(Z)\leq 2^{1/p}$.

Let $z_1=(x_1, y_1), . . . ,
z_n=(x_n, y_n)$ be  a net for $S_Z$ from $S_Z$:  we have $
(||x_i||_X)^p + (||y_i||_Y) ^p=1$ for all $i$. Given
$\varepsilon>0$,  there is an index $j$ such that
$|(x_i)_j|<\varepsilon$ for $i=1, . . . , n$.  Consider the point
$z_j=(e_j, 0) \in Z$. Then we have, for every $i:
||z_i-z_j||_Z^p=(||x_i-e_j||_X) ^p+  (||y_i||_Y)^p \geq
 (||x_i||_X)^p - \varepsilon^p+(1 -\varepsilon)^p+ (||y_i||_Y)^p$.
Since $\varepsilon$ is arbitrary, this proves that $T(Z) \geq 2^{1/p}$, so the equality.
\end{proof}

\section {Further remarks and open questions}

The core of the strange behaviour of $T(\cdot)$ is the following
result:

\begin{lemma}\label{core} Every Banach space $X$ can be embedded as a $1$-complemented
hyperplane in a space $Y$ with $T(Y)=1$.
\end{lemma}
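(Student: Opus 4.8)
The plan is to realize $Y$ as the $\ell_\infty$-sum $Y=X\oplus_\infty\R$, i.e.\ the vector space $X\times\R$ normed by $\|(x,t)\|=\max\{\|x\|_X,\,|t|\}$. There are three things to check, only the third requiring any thought.

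First, $j\colon x\mapsto(x,0)$ is a linear isometry from $X$ onto the closed subspace $X\times\{0\}$, which has codimension one; hence $j(X)$ is a hyperplane of $Y$. Second, the linear projection $P(x,t)=(x,0)$ satisfies $\|P(x,t)\|=\|x\|_X\le\|(x,t)\|$, so $\|P\|=1$ (it is the identity on $j(X)$), which makes $j(X)$ a $1$-complemented subspace.

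Third, I claim $T(Y)=1$. Since $Y$ is infinite-dimensional we already know $T(Y)\ge 1$, so it is enough to produce, for every $\varepsilon>0$, a finite $(1+\varepsilon)$-net from $S_Y$; in fact the two norm-one vectors $y_+=(0,1)$ and $y_-=(0,-1)$ form a net for $B_Y$ of radius exactly $1$. Indeed, take $(x,t)\in B_Y$, so that $\|x\|_X\le 1$ and $|t|\le 1$. If $t\ge 0$ then $|t-1|=1-t\in[0,1]$, whence $\|(x,t)-y_+\|=\max\{\|x\|_X,\,1-t\}\le 1$; if $t\le 0$ then symmetrically $\|(x,t)-y_-\|=\max\{\|x\|_X,\,1+t\}\le 1$. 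Thus $B_Y$ is covered by the two balls of radius $1$ centred at $y_\pm$, so $T(Y)\le 1$ and therefore $T(Y)=1$.

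There is essentially no hard step here: the extra real coordinate, together with the supremum norm, lets the two unit balls around $(0,\pm 1)$ absorb the whole unit ball of $Y$ no matter how bad the geometry of $X$ is, and this is precisely the phenomenon the lemma is meant to isolate. One could also deduce $T(Y)=1$ from part~(3) of Theorem~\ref{Sum 2} applied to an appropriate $\ell_\infty$-sum, but the explicit two-point net is the most transparent route for a single space.
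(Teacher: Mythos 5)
Your proof is correct and follows exactly the paper's route: take $Y=X\oplus_\infty\R$ and observe that the two norm-one points $(0,\pm1)$ form a $1$-net for $B_Y$ under the max norm. You additionally spell out the (routine) hyperplane and $1$-complementation claims, which the paper leaves implicit.
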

\begin{proof} Set $Y=X\oplus_{\infty} \R$ and  consider  in $Y$ the points
$\pm y_0=(0, \pm 1)$. Clearly  $||\pm y_0 ||=1$  and,
for $y=(x,c) \in B_Y$, we have $ ||y \pm y_0||=\max\{ ||x||, |c\pm
1| \}$, and so \, $\min\{ ||y-y_0||,  ||y+y_0||\} \leq 1$.
\end{proof}

Thus, while $T(\ell_\infty)=1$, $T(L^{\infty} [0,1])=2$ since
$T(C(K))=2$ whenever $K$ is an infinite compact Hausdorff space
without isolated points (see \cite{W}) and thus $\ell_\infty$ can
be renormed to have $T(\cdot)=2$. This also follows from the
following result proved in  \cite[Thm. 1.2]{KSW}:  \emph{A space
$Y$ admits a renorming with $T(Y)=2$ if and only if it contains an
isomorphic copy of $\ell_1$}. Which also means that there is a
renorming of $Y=\ell_1 \oplus_ {\infty} \R$ for which $T(Y)=2$.
Since $T(X)<2$ for every reflexive space, no renorming of $Y=X
\oplus \R$ with $T(Y)=2$ exists when $X$ is reflexive.\\

Recall that a Banach space $X$ is said to be \emph{polyhedral} if
the unit ball of any two-dimensional subspace is a polyhedron.
Obviously, $c_0$ is polyhedral  and $ T(c_0)=1$. Moreover, every
subspace of a polyhedral space contains almost-isometric copies of
$c_0$. Nevertheless, there are polyhedral renormings of $c_0$ with
$T(\cdot)$ as close to $2$ as desired (it cannot be $2$ by the
comments above). Consider the following renorming of $c_0$: for
$k\in \N$ set
$$\|(x_n)_n\|_k=  \max _k \left \{ \frac{1}{k} \sum_{j=1}^k|x_{n_j}| \right\}$$
where the maximum is taken over all choices of $k$  different
indexes $n_1, . . . , n_k$. It is easy to check that this space is
polyhedral (see \cite[p.873]{DP}). Moreover, given a finite net
from its unit sphere, let $j$ be an index such that every element
of the net have all components in modulus less than $\varepsilon$
from $j$ onwards. We see that the distance from $k e_j$ to all
elements of the net is at least $ (k-\varepsilon+ k-1)/k$; thus, $T(X)
\geq \frac{2k-1}{k}$. Since $k$ can be as large as we like, $T(X)$
can approach $2$ as much as one wants.\\

An interesting class of Banach spaces with $1<T(X)<2$ is formed by
the \emph{uniformly nonsquare} (UNS is short) spaces. Recall that
a Banach space $X$ is said to be (UNS), if $\sup \big \{  \min
\{||x-y||, ||x+y|| \}: \, x, y \in S_X \big\}<2.$ If a space is
(UNS), then $1< T(X) <2$ (see \cite[Cor. 5.4 and Thm. 5.10]{MP1}).
Next example shows that the converse fails.\\

\noindent  \textbf{Example} The space $X=\R\oplus_1 \ell_p \;
(1\leq p <\infty)$ is not (UNS); we want to show that
$T(X)=2^{1/p}$. By Theorem 2 (1), $T(X) \leq2^{1/p}$; now take a
finite  net in $S_X$ and $\varepsilon>0$. Let the modulus of the
$j$-th  component, for the part in $\ell_p$, be smaller than or
equal to $\varepsilon$ for all elements in the net. Assume that an
element of the net $(c_i, x_i)$ has $||x_i||=b$, so $|c_i|=1-b$;
for $z=(0, e_j)$, the distance from it is at least
$1-b+(b^p-\varepsilon^p+(1-\varepsilon)^p)^{1/p}$,
 so $T(X) \geq 1-b+(b^p+1)^{1/p}$. In $\R^2$, for any $x$ we have
$\|x\|_p/\|x\|_1 \geq 2^{1/p -1}$; so, by taking $x=(1,b)$,  we
see that $(b^p+1)^{1/p} \geq (b+1)( \frac{2^{1/p}}{2})$. An easy
computation then shows that  $T(X) \geq 2^{1/p}$.\\

The equalities $T(\ell_p)=T(L_p)=2^{1/p}$ for $1\leq p\leq \infty$
suggest that spaces with the same ``isometric local structure"
--whatever this may mean-- have the same thickness. A trying
question posed in \cite{CP} is whether $T(X)=T(X^{**})$.

{}

\end{document}